\newtheorem{thm}{Theorem}[section]
\newtheorem{lem}[thm]{Lemma}
\newtheorem{cor}[thm]{Corollary}
\theoremstyle{definition}
\newtheorem{den}[thm]{Definition}
\theoremstyle{remark}
\numberwithin{equation}{section}
\begin{document}

\title{ cohomological characterization of $T$-Lau product algebras }
\author{ N. RAZI AND A. POURABBAS}
\address{Faculty of Mathematics and Computer
Science, Amirkabir University of Technology, 424 Hafez Avenue,
Tehran 15914, Iran}
\email{Razina@aut.ac.ir    }
 \email{arpabbas@aut.ac.ir}

\subjclass[2010]{Primary: 46M10. Secondary: 46H25, 46M18.}

\keywords{$T$-Lau product, injectivity, projectivity, flatness, Hochschild cohomology.}

\begin{abstract}
Let $A$ and $B$ be Banach algebras and let $T$ be an algebra homomorphism from  $B$ into $A$. The Cartesian product space $A\times B$ by $T$- Lau product and $\ell^{1}$- norm becomes a Banach algebra $A\times_{T}B$. We investigate the notions such as injectivity, projectivity and flatness for the Banach algebra $A\times_{T}B$. We also characterize Hochschild cohomology for the Banach algebra $A\times_{T}B$.
\end{abstract}
\maketitle
\section{Introduction and Preliminaries}
Suppose that $A$ and $B$ are Banach algebras and $T:B\rightarrow A$ is an algebra homomorphism. Then we consider the Cartesian product space $A\times B$ with the following multiplication 
 $$(a,b)\times_{T}(c,d)=(ac+T(b)c+aT(d),bd)\ \ \ ((a,b),(c,d)\in A\times B),$$
which is denoted by $A\times_{T} B$. 
Let $\Vert T\Vert\leq 1$. Then we consider $A\times _T B$
with the following norm
$$\Vert (a,b)\Vert=\Vert a\Vert +\Vert b\Vert\hspace*{2cm} ((a,b)\in A\times _T B).$$
We note that $A\times _T B$ is a Banach algebra with this norm and it is called
$T$-Lau product algebras

Whenever the  Banach algebra $A$ is commutative, Bhatt and Dabshi \cite{Bhatt} have investigated the properties of the Banach algebra $A\times_{T} B$, such as Gelfand space, Arens regularity and amenability. 

Whenever  $A$ is unital with unit element $e$ and $\varphi:B\rightarrow \mathbb{C}$ is a character on $B$, assume  $T:B\rightarrow A$ is defined  by $T(b)=\phi (b)e$. In this case  the multiplication $\times_{T}$ corresponds with the product studied by Lau \cite{Lau}. Lau product was extended by Sangani Monfared for the general case and many basic  properties of this product are studied in\cite{Monfared}.

In the definition of $T$-Lau product, we can replace condition $\|T\|\leq 1$ with a  bounded algebra homomorphism $T$, because if we consider  the following norms
$$\|a\|_T=\|T\|\|a\|\ \ \ (a\in A)$$
$$\|b\|_T=\|T\|\|b\|\ \ \ (b\in B)$$
$$\|(a,b)\|_T=\|a\|_T+\|b\|_T\ \ \  (a,b)\in A\times_TB,$$ then all these norms are equivalent with the original norms.
Clearly all  results of  this paper hold when we consider these equivalent norms.

The authors  in \cite{Razi}  for every Banach algebras  $A$ and $B$  and for an algebra homomorphism $T:B\rightarrow A$ with  $\Vert T\Vert\leq 1$  have investigated some homological properties of $T$- Lau product algebra $A\times_{T}B$ such as approximate amenability, pseudo amenability, $\phi$-pseudo amenability, $\phi$- biflatness and $\phi$-biprojectivity and have presented the characterization of the double centralizer algebra of $A\times_{T}B$.  
 
Following \cite{Razi}, in this paper we studied the homological notions such as injectivity, projectivity and flatness for the Banach algebra $A\times_{T}B$. We also characterize the Hochschild cohomology for the Banach algebra $A\times_{T}B$.
\section{Injectivity, Flatness and Projectivity} 
Let $A$ be a Banach algebra. In this paper, the category of Banach left $A$-modules and Banach right $A$-modules is denoted by $A$-{\bf mod} and {\bf mod}- $A$, respectively. We denote by $B(E,F)$ the Banach space of all bounded operators from $E$ into $F$. In the category of $A$-mod, we denote the space of bounded morphisms from $E$ into $F$ by $_AB(E,F)$. A function $S\in B(E,F)$ is called admissible if there exists $S^\prime\in B(F,E)$ such that $S\circ S^\prime\circ S=S$. 

 A. Ya. Helemskii introduced the concepts of injectivity and flatness for Banach algebras \cite{Helemskii homology} and these concepts have been investigated for different classes of Banach modules in \cite{Dalse, Helemskii certain, Ramsden, White}.
\begin{den}
A Banach left $A$-module $K$ is called projective if for every admissible epimorphism $S:E\to F$ in $A$-{\bf mod}, the induced map $S_A:\ _AB(K,E)\rightarrow\ _AB(K,F)$ defined by $$S _A(R_A)=R_A\circ S\qquad (R_A\in _AB(K,E))$$ is surjective.
\end{den}
\begin{den}
A Banach left $A$-module $K$ is called injective if for every admissible monomorphism $S:E\to F$ in $A$-{\bf mod}, the induced map $S _A:\ _AB(F,K)\rightarrow \ _AB(E,K)$ defined by  $$S_A(R_A)=R_A\circ S\qquad (R_A\in \ _AB(F,K))$$ is surjective. 	
\end{den}
\begin{den}
A Banach left $A$-module $K$ is called flat if the dual module  $K^*$ in {\bf mod}-$A$ is injective with the  action defined by
$$(f\cdot a)(x)=f(a\cdot x),$$	
where $a\in A$, $x\in K$ and $f\in K^*$.
\end{den}
Let $A, B$, and $\mathcal{C}$ be Banach algebras and let  $T:B\rightarrow A$ be an algebra homomorphism with $\|T\|\leq 1$. We note that if $A$ is a Banach left $\mathcal{C}$-module, then $A\times_{T}B$ is  a Banach left $\mathcal{C}$-module via the following action
$$c\cdot (a,b)=(c\cdot a+c\cdot T(b),0)\ \ \ ((a,b)\in A\times_{T}B, c\in \mathcal{C}).$$ Similarly  if $B$ is a Banach left $\mathcal{C}$-module, then $A\times_{T}B$ is  a Banach left $\mathcal{C}$-module via the following action $$c\cdot (a,b)=(-T(c\cdot b),c\cdot b).$$ 
\begin{thm}\label{thm1}
 Suppose that $A$ and $B$ are Banach algebras and  $T:B\rightarrow A$ is an algebra homomorphism with $\|T\|\leq 1$. Suppose that $\mathcal{C}$ is Banach algebra and $A\times_{T}B$ is injective as $\mathcal{C}$-module. Then $A$ and $B$ are injective as $\mathcal{C}$-module.
\begin{proof}
Let $A$ be a Banach left $\mathcal{C}$-module and let $F,K\in \mathcal{C}$-{\bf mod}. Suppose that  $S\in\ _\mathcal{C}B(F,K)$  is admissible and monomorphism. We will show  that the induced map $S_A:\ _\mathcal{C}B(K,A)\rightarrow \ _\mathcal{C}B(F,A)$ is onto.

We conclude that  $A\times_TB\in \mathcal{C}$-{\bf mod} via  the following action
$$c\cdot (a,b)=(c\cdot a+c\cdot T(b),0).$$
Since $A\times_{T}B$ is injective, the induced map $S_{A\times_{T}B}: \ _\mathcal{C}B(K,A\times_{T}B)\rightarrow\  _\mathcal{C}B(F,A\times_{T}B)$ is onto.
Let $\lambda\in\  _\mathcal{C}B(F,A)$ and $f\in F$. We define $\tilde{\lambda}:F\rightarrow A\times_{T}B$ by $\widetilde{\lambda}(f)=({\lambda}(f),0)$. Hence we have $\tilde{\lambda}\in\  _\mathcal{C}B(F,A\times_{T}B)$. Since $S_{A\times_{T}B}: \ _\mathcal{C}B(K,A\times_{T}B)\rightarrow  \ _\mathcal{C}B(F,A\times_{T}B)$ is onto, there exists $R_{A\times_{T}B}:K\rightarrow A\times_{T}B$ such that $R_{A\times_{T}B}(T(f))=\tilde{\lambda}(f)=({\lambda}(f),0)$. We define $R_A:K\rightarrow A$ by $R_{A}= P_{A}\circ R_{A\times_{T}B}$, where $P_{A}:A\times_{T}B\rightarrow A$ is defined by $p_A(a,b)=a$.
Clearly $R_A\in\  _\mathcal{C}B(K,A)$ and $R_A\circ T^{\prime}=\lambda$. So $A$ is injective.

For injectivity of $B$, suppose that $B$ is a Banach left $\mathcal{C}$-module, $F$ and $K\in \mathcal{C}$-{\bf mod} and $S\in\  _\mathcal{C}B(F,K)$ is admissible and monomorphism. We must show that the induced map $S_B:\ _\mathcal{C}B(K,B)\rightarrow\  _\mathcal{C}B(F,B)$ is onto. We have $A\times_{T}B\in \mathcal{C}$-{\bf mod} with the following actions
$$c\cdot (a,b)=(-T(c\cdot b),c\cdot b).$$ 
Since $A\times_{T}B$ is injective, the induced map $S_{A\times_{T}B}:\ _\mathcal{C}B(K,A\times_{T}B)\rightarrow \ _\mathcal{C}B(F,A\times_{T}B)$ is onto. Let $\mu\in \ _\mathcal{C}B(F,B)$ and $f\in F$. We define $\widetilde{\mu}:F\rightarrow A\times_{T}B$ by $\widetilde{\mu}(f)=(0,\mu(f))$. Then we have $\tilde{\mu}\in\  _\mathcal{C}B(F,A\times_{T}B)$. Since $S_{A\times_{T}B}:\ _\mathcal{C}B(K,A\times_{T}B)\rightarrow\  _\mathcal{C}B(F,A\times_{T}B)$ is onto, there exists $R_{A\times_{T}B}:K\rightarrow A\times_{T}B$ such that $R_{A\times_{T}B}(T(f))=\widetilde{\mu}=(0,\mu(f))$. We define $R_A:K\rightarrow A$ by $R_A=P_A\circ R_{A\times_{T}B}$. Clearly $R_A\in \ _\mathcal{C}B(K,B)$ and $R_A\circ S=\mu$. Hence $B$ is injective. This completes the proof.	
\end{proof}
\end{thm}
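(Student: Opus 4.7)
The plan is to realize $A$ and $B$ as retracts of $A\times_T B$ in the category of Banach left $\mathcal{C}$-modules and then transport the injective lifting property down to each retract. The general principle I would invoke is that if $\iota\colon M\to N$ and $P\colon N\to M$ are $\mathcal{C}$-module morphisms with $P\circ \iota=\mathrm{id}_M$ and $N$ is injective, then $M$ is injective, since for any admissible monomorphism $S\colon F\to K$ and any $\lambda\in\  _\mathcal{C}B(F,M)$ one may lift $\iota\circ \lambda$ to some $\widetilde{R}\in\  _\mathcal{C}B(K,N)$ and take $R=P\circ \widetilde{R}$.

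For the case of $A$: I would equip $A\times_T B$ with the first action $c\cdot (a,b)=(c\cdot a+c\cdot T(b),0)$ described before the theorem. The maps $\iota_A\colon A\to A\times_T B$, $a\mapsto (a,0)$, and $P_A\colon A\times_T B\to A$, $(a,b)\mapsto a$, are easily checked to be $\mathcal{C}$-module morphisms and to satisfy $P_A\circ \iota_A=\mathrm{id}_A$. The general principle above then gives injectivity of $A$.

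For the case of $B$: I would switch to the second action $c\cdot (a,b)=(-T(c\cdot b),c\cdot b)$. Here there is a genuine subtlety that deserves attention: the naive embedding $b\mapsto (0,b)$ is \emph{not} $\mathcal{C}$-linear, since under this action $c\cdot (0,b)=(-T(c\cdot b),c\cdot b)$. The correct choice is $\iota_B(b)=(-T(b),b)$; a direct verification confirms that $c\cdot \iota_B(b)=\iota_B(c\cdot b)$. Paired with the coordinate projection $P_B(a,b)=b$, which is a $\mathcal{C}$-module morphism because $P_B\bigl(c\cdot (a,b)\bigr)=c\cdot b=c\cdot P_B(a,b)$, one obtains $P_B\circ \iota_B=\mathrm{id}_B$, and the same retraction argument yields injectivity of $B$.

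The only nontrivial point in the whole argument is identifying the correct $\mathcal{C}$-equivariant section $\iota_B$ in the $B$ case; once this is in hand, the lifting of $\mu\in\  _\mathcal{C}B(F,B)$ to $\widetilde{\mu}=\iota_B\circ \mu\in\  _\mathcal{C}B(F,A\times_T B)$ and the subsequent composition with $P_B$ is entirely routine.
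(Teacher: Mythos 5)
Your overall strategy --- realizing $A$ and $B$ as retracts of $A\times_T B$ in $\mathcal{C}$-\textbf{mod} and invoking the standard fact that a retract of an injective module is injective --- is sound, and it is essentially the lift-and-project argument the paper itself uses, just packaged more cleanly. Your treatment of the $B$ case is correct: you rightly observe that $b\mapsto(0,b)$ is not $\mathcal{C}$-linear for the action $c\cdot(a,b)=(-T(c\cdot b),c\cdot b)$ and that the equivariant section is $\iota_B(b)=(-T(b),b)$, paired with the projection $(a,b)\mapsto b$; this is actually more careful than the paper's own argument, which lifts $f\mapsto(0,\mu(f))$ directly.

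There is, however, a genuine gap in the $A$ case: the coordinate projection $P_A(a,b)=a$ is \emph{not} a $\mathcal{C}$-module morphism for the action $c\cdot(a,b)=(c\cdot a+c\cdot T(b),0)$. Indeed $P_A\bigl(c\cdot(a,b)\bigr)=c\cdot a+c\cdot T(b)$, whereas $c\cdot P_A(a,b)=c\cdot a$, and these differ by $c\cdot T(b)$, which need not vanish. Consequently the map $R=P_A\circ\widetilde{R}$ you produce need not lie in ${}_\mathcal{C}B(K,A)$: writing $\widetilde{R}(k)=(a_k,b_k)$, one gets $R(c\cdot k)=c\cdot a_k+c\cdot T(b_k)$, and there is no control over the second component $b_k$ of an arbitrary lift, so $R(c\cdot k)\neq c\cdot R(k)$ in general. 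The subtlety you correctly flagged for $B$ has an exact counterpart for $A$, only on the projection side rather than the section side: the equivariant retraction is $P_A'(a,b)=a+T(b)$, which satisfies $P_A'\bigl(c\cdot(a,b)\bigr)=c\cdot a+c\cdot T(b)=c\cdot P_A'(a,b)$ and still composes with $\iota_A(a)=(a,0)$ to the identity on $A$. With this one replacement the retract argument closes.
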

Let $A, B$ and $\mathcal{C}$ be Banach algebras and let $T:B\rightarrow A$ be an algebra homomorphism with $\|T\|\leq 1$. We note that if $A\times_{T}B$ is the Banach left $\mathcal{C}$-module, then $A$ and $B$ can be the Banach left $\mathcal{C}$-modules with the following actions
$$c\cdot a=c\cdot (a,0)\quad \hbox{and}\quad c\cdot b=c\cdot(0,b),$$
where $c\in \mathcal{C}, a\in A$ and $ b\in B$.
\begin{thm}
Suppose that $A$ and $B$ are Banach algebras and $T:B\rightarrow A$ is an algebra homomorphism with $\|T\|\leq 1$. Suppose that $\mathcal{C}$ is a Banach algebra and $A$ and $B$ are injective as $\mathcal{C}$-{\bf mod}. Then $A\times_{T}B$ is injective as $\mathcal{C}$-{\bf mod}.
\begin{proof}
Let $A\times_{T}B$ be a Banach left $\mathcal{C}$-module and $F,K\in\  \mathcal{C}$-{\bf mod}. Let $S\in\  _\mathcal{C}B(F,K)$ such that $S$ is admissible and monomorphism. We must show that the induced map $S_{A\times_{T}B}:\ _\mathcal{C}B(K,A\times_{T}B)\rightarrow\  _\mathcal{C}B(F,A\times_{T}B)$ is onto.
We have $A,B\in \mathcal{C}$-{\bf mod} with the following actions
$$c\cdot a=c\cdot (a,0)\quad \hbox{and}\quad c\cdot b=c\cdot(0,b),$$
where $c\in \mathcal{C}, a\in A$ and $ b\in B$. Since $A$ and $B$ are injective, the induced maps $S_A: \ _\mathcal{C}B(K,A)\rightarrow\  _\mathcal{C}B(F,A)$ and $S_B:\ _\mathcal{C}B(K,B)\rightarrow \ _\mathcal{C}B(F,B)$
are onto. Suppose that $\lambda\in \ _\mathcal{C}B(F, A\times_TB)$ and $(a,b)\in A\times_TB$ such that $\lambda(f)=(a,b)$ for $f\in F$.

We define $\widetilde{\lambda}:F\rightarrow A$ by $\widetilde{\lambda}(f)=a+T(b)$ and $\widetilde{\mu}:F\rightarrow B$ by $\widetilde{\mu}(f)=b$. Hence we have $\widetilde{\lambda}\in\  _\mathcal{C}B(F,A)$ and $\widetilde{\mu}\in\  _\mathcal{C}B(F,B)$. Since $S_A:\ _\mathcal{C}B(k,A)\rightarrow\  _\mathcal{C}B(F,A)$ and $S_B:\ _\mathcal{C}B(K,B)\rightarrow \ _\mathcal{C}B(F,B)$ are onto, there exist $R_A:K\rightarrow A$ and $R_B:K\rightarrow B$ such that $R_A\circ S(f)=\widetilde{\lambda}(f)=a+T(b)$ and $R_B\circ S(f)=\widetilde{\mu}(f)=b$.

We define $R_{A\times_{T}B}:K\rightarrow A\times_{T}B$ by $R_{A\times_{T}B}=q_A\circ R_A+\eta_B\circ R_B$, where $\eta_B:B\rightarrow A\times_{T}B$ such that $\eta_B(b)=(-T(b),b)$.
Clearly $R_{A\times_{T}B}\in _\mathcal{C}B(K,A\times_{T}B)$ and $R_{A\times_{T}B}\circ S=\lambda$. So $A\times_{T}B$ is injective.
\end{proof}
\end{thm}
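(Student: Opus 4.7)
The plan is to exhibit $A\times_T B$ as being assembled from $A$ and $B$ via the two canonical maps $q_A:A\to A\times_T B$, $q_A(a)=(a,0)$, and $\eta_B:B\to A\times_T B$, $\eta_B(b)=(-T(b),b)$, and then to reduce the injectivity of $A\times_T B$ to the injectivity of $A$ and of $B$ separately. The key observation is that every element $(a,b)\in A\times_T B$ admits the unique decomposition $(a,b)=q_A(a+T(b))+\eta_B(b)$, so that any morphism valued in $A\times_T B$ can be split into an $A$-component and a $B$-component that can be lifted independently.

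Given an admissible monomorphism $S:F\to K$ in $\mathcal{C}$-\textbf{mod} and $\lambda\in {}_\mathcal{C}B(F,A\times_T B)$, write $\lambda(f)=(a(f),b(f))$ for the coordinate functions and introduce $\widetilde{\lambda}:F\to A$ by $\widetilde{\lambda}(f)=a(f)+T(b(f))$ together with $\widetilde{\mu}:F\to B$ by $\widetilde{\mu}(f)=b(f)$. After checking that $\widetilde{\lambda}$ and $\widetilde{\mu}$ are $\mathcal{C}$-module morphisms with respect to the induced actions $c\cdot a=c\cdot(a,0)$ and $c\cdot b=c\cdot(0,b)$, the assumed injectivity of $A$ and $B$ furnishes $R_A\in {}_\mathcal{C}B(K,A)$ and $R_B\in {}_\mathcal{C}B(K,B)$ satisfying $R_A\circ S=\widetilde{\lambda}$ and $R_B\circ S=\widetilde{\mu}$. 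I would then define $R_{A\times_T B}:=q_A\circ R_A+\eta_B\circ R_B$, which evaluated at $k\in K$ reads $(R_A(k)-T(R_B(k)),R_B(k))$. Substituting $k=S(f)$ yields $(a(f)+T(b(f))-T(b(f)),b(f))=\lambda(f)$, confirming $R_{A\times_T B}\circ S=\lambda$.

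The principal obstacle is the $\mathcal{C}$-module bookkeeping at both ends: verifying that the two component maps $\widetilde{\lambda}$, $\widetilde{\mu}$ are $\mathcal{C}$-linear with respect to the actions induced on $A$ and $B$ from $A\times_T B$, and then verifying that the reconstructed map $R_{A\times_T B}$ is $\mathcal{C}$-linear with respect to the original action on $A\times_T B$. Both points boil down to confirming that $q_A$ and $\eta_B$ become $\mathcal{C}$-module morphisms under the induced structures; once this compatibility is established, the remaining verifications are straightforward bounded-linearity calculations and the conclusion follows.
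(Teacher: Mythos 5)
Your proposal is correct and follows essentially the same route as the paper: the same decomposition $(a,b)=q_A(a+T(b))+\eta_B(b)$ with $\eta_B(b)=(-T(b),b)$, the same component maps $\widetilde{\lambda}$, $\widetilde{\mu}$, and the same reassembled lift $R_{A\times_T B}=q_A\circ R_A+\eta_B\circ R_B$. You are in fact more explicit than the paper about the $\mathcal{C}$-linearity checks that the paper dismisses with ``clearly.''
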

Let $A, B$ and  $\mathcal{C}$ be Banach algebras. We note that if $A^*\times B^*$ is a Banach left $\mathcal{C}$-module, then $A^*$ and $B^*$ can be consider as Banach left $\mathcal{C}$-modules via the following actions
$$c\cdot a^*=c\cdot (a^*,0)\quad\hbox{and}\quad c\cdot b^*=c\cdot(0,b^*),$$ 
where $c\in \mathcal{C}, a^*\in A^*$ and $b^*\in B^*$.
\begin{thm}
Suppose that $A$ and $B$ are Banach algebras and $T:B\rightarrow A$ is an algebra homomorphism with $\|T\|\leq 1$. Suppose that $\mathcal{C}$ is a Banach algebra. Then $A\times_{T}B$ is flat as {\bf mod}-\,$\mathcal{ C}$ if and only if $A$ and $B$ are flat as {\bf mod}-\,$\mathcal{ C}$.
\begin{proof}
Let $A\times_{T}B$ be flat as {\bf mod}-$\mathcal{C}$. With a simple argument we can show that $(A\times_{T}B)^*\cong A^*\times B^*$. Hence by similar argument as in Theorem \ref{thm1}, one can show that $A$ and $B$ are flat Banach algebras as {\bf mod}-$\mathcal{C}$

Conversely, let $A$ and $B$ be flat Banach algebras as {\bf mod}-$\mathcal{C}$, let $F,K\in \mathcal{C}$-{\bf mod} and let $S \in \ _\mathcal{C}B(F,K)$ such that $S$ is admissible and monomorphism. Then we show that the induced map $S_{A^*\times B^*}:\ _\mathcal{C}B(K,A^*\times B^*)\rightarrow \ _\mathcal{C}B(F,A^*\times B^*)$ is onto. We have $A^*,B^*\in \mathcal{C}$-{\bf mod} with the following actions
$$c\cdot a^*=c\cdot (a^*,0)\quad \hbox{and}\quad c\cdot b^*=c\cdot (0,b^*),$$
where $c\in \mathcal{C}, a^*\in A^*$ and $ b^*\in B^*$.

Since $A^*$ and $B^*$  are injective as left $\mathcal{C}$-module, the induced maps $S_{A^*}:\ _\mathcal{C}B(K,A^*)\rightarrow \ _\mathcal{C}B(F,A^*) $ and $S_{B^*}:\ _\mathcal{C}B(K,B^*)\rightarrow \ _\mathcal{C}B(F,B^*)$ are onto. Suppose that $\lambda^*\in \ _\mathcal{C}B(F,A^*\times B^*)$ and $(a^*,b^*)\in A^*\times B^*$ such that $\lambda^*(f)=(a^*,b^*)$ for $f\in F$.

We define $\widetilde{\lambda^*}:F\rightarrow A^*$ by $\widetilde{\lambda^*}(f)=a^*$ and
$\tilde{\mu^*}:F\rightarrow B^*$ by $\widetilde{\mu^*}(f)=b^*$.
Hence we have $\widetilde{\lambda^*}\in \ _\mathcal{C}B(F,A^*)$ and $\widetilde{\mu^*}\in \ _\mathcal{C}B(F,B^*)$. Since $S_{A^*}:\ _\mathcal{C}B(K,A^*)\rightarrow \ _\mathcal{C}B(F,A^*)$ and $S_{B^*}:\ _\mathcal{C}B(K,B^*)\rightarrow \ _\mathcal{C}B(F,B^*)$ are onto, there exist $R_{A^*}:K\rightarrow A^*$ and $R_{B^*}:K\rightarrow B^*$
such that $R_{A^*}\circ S(f)=\widetilde{\lambda^*}(f)=a^*$ and $R_{B^*}\circ S(f)=\widetilde{\mu^*}(f)=b^*$.

We define $R_{A^*\times B^*}:K\rightarrow A^*\times B^*$ by $R_{A^*\times B^*}=q_{A^*}\circ R_{A^*}+q_{B^*}\circ R_{B^*}$, where $q_{A^*}:A^*\rightarrow A^*\times B^*$ and $q_B^*:B^*\rightarrow A^*\times B^*$ are defined by $q_{A^*}(a^*)=(a^*,0)$ and $q_{B^*}(b^*)=(0,b^*)$, respectively. Clearly $R_{A^*\times B^*}\in \ _\mathcal{C}B(K,A^*\times B^*)$ and $R_{A^*\times B^*}\circ S=\lambda^*$. So $A^*\times B^*$ is injective as left $\mathcal{C}$-module. This completes the proof. 
\end{proof}
\end{thm}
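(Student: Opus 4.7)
The plan is to reduce the flatness assertion to an injectivity assertion on the dual modules, so that Theorems \ref{thm1} and the second theorem can be re-run almost verbatim, but with the direct product $A^{*}\times B^{*}$ replacing the $T$-Lau product. The first step is to identify $(A\times_{T}B)^{*}$ with $A^{*}\times B^{*}$ as a Banach space. Since the norm on $A\times_{T}B$ is the $\ell^{1}$-sum norm, the dual is isometrically the $\ell^{\infty}$-sum $A^{*}\times B^{*}$ via the pairing $\langle (a,b),(a^{*},b^{*})\rangle=\langle a,a^{*}\rangle+\langle b,b^{*}\rangle$. By the definition of flatness it then suffices to prove that $A^{*}\times B^{*}$ is injective as a left $\mathcal{C}$-module if and only if both $A^{*}$ and $B^{*}$ are; once this is done, flatness of $A$, $B$, $A\times_{T}B$ translates directly.

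For the forward implication I would copy the pattern of Theorem \ref{thm1}. Given an admissible monomorphism $S\in{}_{\mathcal{C}}B(F,K)$ and $\lambda\in{}_{\mathcal{C}}B(F,A^{*})$, embed it into the product by setting $\widetilde{\lambda}(f)=(\lambda(f),0)$; this is a $\mathcal{C}$-module map because the action on $A^{*}$ is induced by the action $c\cdot a^{*}=c\cdot(a^{*},0)$ on the first summand. Injectivity of $A^{*}\times B^{*}$ supplies an extension $\widetilde{R}\in{}_{\mathcal{C}}B(K,A^{*}\times B^{*})$ with $\widetilde{R}\circ S=\widetilde{\lambda}$, and composing with the coordinate projection $P_{A^{*}}$ gives the required extension $R\in{}_{\mathcal{C}}B(K,A^{*})$. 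The same argument with $B^{*}$ in place of $A^{*}$ finishes the forward direction. For the converse, given $\lambda\in{}_{\mathcal{C}}B(F,A^{*}\times B^{*})$ I would split $\lambda(f)=(\lambda_{1}(f),\lambda_{2}(f))$ into its $A^{*}$- and $B^{*}$-components, observe that each component is a $\mathcal{C}$-module morphism, apply injectivity of $A^{*}$ and $B^{*}$ separately to produce extensions $R_{1}:K\to A^{*}$ and $R_{2}:K\to B^{*}$, and then assemble $R=q_{A^{*}}\circ R_{1}+q_{B^{*}}\circ R_{2}$; the relation $R\circ S=\lambda$ follows coordinatewise.

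The main technical point, and the only place that could cause trouble, is the verification that the module structures line up correctly: the $\mathcal{C}$-actions on $A^{*}$ and $B^{*}$ used in the separate injectivity arguments must coincide with the restrictions of the product action on $A^{*}\times B^{*}$. In the injective theorems for $A\times_{T}B$ this required the twisted actions $c\cdot(a,b)=(c\cdot a+c\cdot T(b),0)$ or $c\cdot(a,b)=(-T(c\cdot b),c\cdot b)$ to account for $T$, but here the duality removes the twist: since $T$ does not enter the Banach space splitting $(A\times_{T}B)^{*}\cong A^{*}\times B^{*}$, the actions decouple into an honest direct product and the embeddings $a^{*}\mapsto(a^{*},0)$, $b^{*}\mapsto(0,b^{*})$ are genuine $\mathcal{C}$-module maps. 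Once this is checked, the rest is the same diagram chase as in Theorems \ref{thm1} and its companion, applied in the dual category.
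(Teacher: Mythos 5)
Your proposal follows essentially the same route as the paper: identify $(A\times_{T}B)^{*}$ with $A^{*}\times B^{*}$, reduce flatness to injectivity of the dual modules, and then run the coordinate embedding/projection diagram chase of Theorem \ref{thm1} in one direction and the component-splitting-and-reassembly argument in the other. The compatibility point you flag (that the $\mathcal{C}$-actions on $A^{*}$ and $B^{*}$ must be the restrictions of the product action) is likewise left essentially unverified in the paper, so your treatment matches it in both structure and level of detail.
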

Let $A, B$ and  $\mathcal{C}$ be Banach algebras and let $T:B\rightarrow A$ be an algebra homomorphism with $\|T\|\leq 1$.  If $A\times_T B$ is a Banach left $\mathcal{C}$-module,  as we have seen before, $A$ and $B$ are Banach left $\mathcal{C}$-modules.
\begin{thm}
Suppose that $A$ and $B$ are Banach algebras and $T:B\rightarrow A$ is an algebra homomorphism with $\|T\|\leq 1$. Suppose that $\mathcal{C}$ is a Banach algebra. Then $A\times_{T}B$ is projective as $\mathcal{C}$-{\bf mod} if and only if $A$ and $B$ are projective as $\mathcal{C}$-{\bf mod}.
\begin{proof}
Let $A\times_{T}B$ be projective as Banach left $\mathcal{C}$-module and $F,K\in \mathcal{C}$-{\bf mod}. Let $S \in \ _\mathcal{C}B(K,F)$ be admissible and epimorphism. We show that the induced map $S_{A\times_{T}B}:\ _\mathcal{C}B(A\times_{T}B, K)\rightarrow \ _\mathcal{C}B(A\times_{T}B, F)$ is onto. 

Since $A,B\in\mathcal{C}$-{\bf mod}  are projective, the induced map $S_A:\ _\mathcal{C}B(A,K)\rightarrow\ _\mathcal{C}B(A,F)$ and $S_B:\ _\mathcal{C}B(B,K)\rightarrow\ _\mathcal{C}B(B,F)$ are onto. Let $\lambda\in \ _\mathcal{C}B(A\times_{T}B, F)$ and $f_1, f_2\in F$ such that $\lambda(a,0)=f_1, \lambda(0,b)=f_2$ for $(a,0), (0,b)\in A\times_TB$. 
We define $\widetilde{\lambda}:A\rightarrow F$ by $\widetilde{\lambda}(a)=\lambda(a,0)=f_1$ and $\widetilde{\mu}:B\rightarrow F$ by $\widetilde{\mu}(b)=\lambda(0,b)=f_2$. Hence we have $\widetilde{\lambda}\in \ _\mathcal{C}B(A,F)$ and $\widetilde{\mu}\in\ _\mathcal{C}B(B,F)$. Since $ S_A:\ _\mathcal{C}B(A,K)\rightarrow \ _\mathcal{C}B(A,F)$ and $S_B:\ _\mathcal{C}B(B,K)\rightarrow \ _\mathcal{C}B(B,F)$ are onto, there exist $R_A\in \ _\mathcal{C}B(A,K)$ and $R_B\in \ _\mathcal{C}B(B,K)$ such that $S\circ R_A(a)=\widetilde{\lambda}(a)=f_1$ and $S \circ R_B(b)=\widetilde{\mu}(b)=f_2$. We define $R_{A\times_{T}B}: A\times_{T}B\rightarrow K$ by $R_{A\times_{T}B}=R_A\circ P_A+R_B\circ P_B$. Clearly $R_{A\times_{T}B}\in \ _\mathcal{C}B(A\times_{T}B,K)$ and $T^\prime\circ R_{A\times_{T}B}=\lambda$. Hence $A\times_{T}B$ is projective as left $\mathcal{C}$-module.

Conversely, let $A$ be a Banach left $\mathcal{C}$-module and $F,K\in \mathcal{C}$-{\bf mod} and let $S\in \ _\mathcal{C}B(K,F)$ such that $S$ be admissible and epimorphism. We show that the induced map $S_A:\ _\mathcal{C}B(A,K)\rightarrow \ _\mathcal{C}B(A,F)$ is onto. We have $A\times_{T}B\in \mathcal{C}$-{\bf mod} with the following action
$$c\cdot (a,b)=(c\cdot a+c\cdot T(b),0).$$ Since $A\times_{T}B$ is projective, the induced map $S_{A\times_{T}B}:\ _\mathcal{C}B(A\times_{T},K)\rightarrow \ _\mathcal{C}B(A\times_{T}B,F)$ is onto. Let $\lambda\in \ _\mathcal{C}B(A,F)$ and $f\in F$ such that $\lambda(a)=f$ for $a\in A$. We define $\widetilde{\lambda}:A\times_{T}B\rightarrow F$ by $\widetilde{\lambda}(a,b)=\lambda(a+T(b))$. Hence we have $\widetilde{\lambda}\in \ _\mathcal{C}B(A\times_{T}B,F)$. Since $S_{A\times_{T}B}:\ _\mathcal{C}B(A\times_{T}B,K)\rightarrow \ _\mathcal{C}B(A\times_{T}B,F)$ is onto, there exists $R_{A\times_{T}B}\in B(A\times_{T}B,K)$ such that $S\circ R_{A\times_{T}B}=\widetilde{\lambda}$. We define $R_A:A\rightarrow K$ by $R_A=R_{A\times_{T}B}\circ q_A$, where $q_A:A\rightarrow A\times_{T}B$ is defined by $q_A(a)=(a,0)$. Clearly $R_A\in \ _\mathcal{C}B(A,K)$ and $S\circ R_A=\lambda$. Hence $A$ is projective as left $\mathcal{C}$-module.

For the proof of projectivity of $B$, let $B$ be a Banach left $\mathcal{C}$-module and $F,K\in \mathcal{C}$-{\bf mod} and let $S\in \ _B(K,F)$ such that $S$ is admissible and epimorphism. We show that the induced map $S_B:\ _\mathcal{C}B(B,K)\rightarrow\ _\mathcal{C}B(B,F)$ is onto. We have $A\times_{T}B\in \mathcal{C}$-{\bf mod} with the following action
$$c\cdot (a,b)=(-T(c\cdot b),c\cdot b).$$ Since $A\times_{T}B$ is projective as left $\mathcal{C}$-module, the induced map $S_{A\times_{T}B}:\ _\mathcal{C}B(A\times_{T}B,K)\rightarrow \ _\mathcal{C}B(A\times_{T}B,F)$ is onto. Let $\lambda\in \ _\mathcal{C}B(B,F)$. We define $\widetilde{\lambda}:A\times_{T}B\rightarrow F$ by $\widetilde{\lambda}(a,b)=\lambda(b)$ for $(a,b)\in A\times_{T}B$. Hence $\widetilde{\lambda}\in \ _\mathcal{C}B(A\times_{T}B,F)$. Since $S_{A\times_{T}B}:\ _\mathcal{C}B(A\times_{T}B,K)\rightarrow\ _\mathcal{C}B(A\times_{T}B,F)$ is onto, there exists $R_{A\times_{T}B}\in \ _\mathcal{C}B(A\times_{T}B,K)$ such that $S\circ R_{A\times_{T}B}(a,b)=\widetilde{\lambda}(a,b)=\lambda(b)$. We define $R_B:B\rightarrow K$ by $R_B=R_{A\times_{T}B}\circ q_B$, where $q_B:B\rightarrow A\times_{T}B$ is defined by $q_B(b)=(0,b)$ for $b\in B$. Clearly $R_B\in \ _\mathcal{C}B(B,K)$ and $S\circ R_B=\lambda$. Hence $B$ is projective as left $\mathcal{C}$-module.  
\end{proof}
\end{thm}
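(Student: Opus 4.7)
The plan is to prove both implications by mimicking the module-theoretic bookkeeping used in the earlier injectivity/flatness theorems, swapping the directions of arrows appropriately. For the reverse implication ($A$ and $B$ projective $\Rightarrow$ $A\times_{T}B$ projective), I would take an admissible epimorphism $S:K\to F$ in $\mathcal{C}$-{\bf mod} and $\lambda\in {}_\mathcal{C}B(A\times_{T}B,F)$, then restrict $\lambda$ along the canonical maps $q_A:A\to A\times_{T}B$ and $q_B:B\to A\times_{T}B$ (using the $\mathcal{C}$-actions on $A$ and $B$ inherited from $A\times_{T}B$). Projectivity of $A$ and $B$ yields lifts $R_A:A\to K$ and $R_B:B\to K$ with $S\circ R_A=\lambda\circ q_A$ and $S\circ R_B=\lambda\circ q_B$. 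I would then assemble $R:A\times_{T}B\to K$ by $R(a,b)=R_A(a)+R_B(b)$ and verify directly that $S\circ R(a,b)=\lambda(a,0)+\lambda(0,b)=\lambda(a,b)$ using linearity.

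For the forward implication I would treat $A$ and $B$ separately. Starting with $A$ a left $\mathcal{C}$-module, I would make $A\times_{T}B$ a left $\mathcal{C}$-module via $c\cdot(a,b)=(c\cdot a+c\cdot T(b),0)$, exactly as done for injectivity. Given $\lambda\in {}_\mathcal{C}B(A,F)$, I would define its lift $\widetilde{\lambda}(a,b)=\lambda(a+T(b))$ and check that $\widetilde{\lambda}$ respects the $\mathcal{C}$-action. Projectivity of $A\times_{T}B$ then provides $\widetilde{R}:A\times_{T}B\to K$ with $S\circ\widetilde{R}=\widetilde{\lambda}$, and pulling back along $q_A(a)=(a,0)$ yields $R_A=\widetilde{R}\circ q_A$ with $S\circ R_A=\lambda$. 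For $B$, the symmetric construction uses the action $c\cdot(a,b)=(-T(c\cdot b),c\cdot b)$ on $A\times_{T}B$, the lift $\widetilde{\mu}(a,b)=\mu(b)$, and the map $q_B(b)=(0,b)$.

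The step I expect to be the actual sticking point is checking $\mathcal{C}$-equivariance of the lifts and assembly maps, because this is exactly where the unusual definitions of the $\mathcal{C}$-module structures on $A\times_{T}B$ play a role. For example, for $\widetilde{\mu}(a,b)=\mu(b)$ to be a $\mathcal{C}$-module map, one needs $\widetilde{\mu}(c\cdot(a,b))=\widetilde{\mu}(-T(c\cdot b),c\cdot b)=\mu(c\cdot b)=c\cdot \widetilde{\mu}(a,b)$, which works precisely because the action was designed so that the $B$-coordinate is $c\cdot b$. Similarly, for $\widetilde{\lambda}(a,b)=\lambda(a+T(b))$, the identity $\widetilde{\lambda}(c\cdot a+c\cdot T(b),0)=\lambda(c\cdot a+c\cdot T(b))=c\cdot\lambda(a+T(b))$ is exactly what the first action enforces. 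Once these routine equivariance checks are dispatched, the rest is bookkeeping, and the conclusions follow from projectivity of the relevant modules. I would structure the write-up into three blocks matching the three claims, taking care to use distinct notation $q_A,q_B,P_A,P_B$ so that the composition identities are transparent.
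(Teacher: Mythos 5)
Your proposal is correct and follows essentially the same route as the paper: the same restriction along $q_A$, $q_B$ and assembly $R(a,b)=R_A(a)+R_B(b)$ (the paper's $R_A\circ P_A+R_B\circ P_B$) for one direction, and the same auxiliary $\mathcal{C}$-module structures $c\cdot(a,b)=(c\cdot a+c\cdot T(b),0)$ and $c\cdot(a,b)=(-T(c\cdot b),c\cdot b)$ with the lifts $\widetilde{\lambda}(a,b)=\lambda(a+T(b))$ and $\widetilde{\mu}(a,b)=\mu(b)$ for the other. Your explicit equivariance checks are in fact slightly more careful than what the paper writes out.
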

\section{Hochschild cohomology for the Banach algebra $A\times_{T}B$}
The concept of Hochschild cohomology for Banach algebras has been studied by  Kamowitz\cite{Kamowitz},  Johnson\cite{Johnson,Johnson1972} and others. Recall that let $A$ be a Banach algebra and let $X$ be a Banach $A$-bimodule. We denote the space of bounded $n$-linear maps from $A$ into $X$ by $\mathcal{C}^n(A,X)$. For $T\in \mathcal{C}^n(A,X)$ we define the map $\delta^n:\mathcal{C}^n(A,X)\rightarrow \mathcal{C}^{n+1}(A,X)$ by 
\begin{equation*}
\begin{split}
(\delta^nT)(a_1,...,a_{n+1}) =&
a_1\cdot T(a_2,...,a_{n+1})\\ &\,\,+\sum^{n}_{i=1}(-1)^iT(a_1,...,a_ia_{i+1},...,a_{n+1})\\
=&(-1)^{n+1}T(a_1,...,a_n)\cdot a_{n+1}.
\end{split}
\end{equation*}
 $T$ is called an $n$-cocycle if $\delta^nT=0$ and it is called $n$-coboundary if there exists $S\in \mathcal{C}^{n-1}(A,X)$ such that $T=\delta^{n-1}S$. We denote the linear space of all $n$-cocycles by $\mathcal{Z}^n(A,X)$ and the linear space of all $n$-coboundaries by $\mathcal{B}^n(A,X)$. Clearly $\mathcal{Z}^n(A,X)$ includes $\mathcal{B}^n(A,X)$. We also recall that the $n$-th Hochschild cohomology group $\mathcal{H}^n(A,X)$ is defined by the following quotient, 
$$\mathcal{H}^n(A,X)=\frac{\mathcal{Z}^n(A,X)}{\mathcal{B}^n(A,X)},$$
for more details, see \cite{Johnson1972}. We remark that a left (right) Banach $A$-module $X$ is called left (right) essential if the linear span of $A\cdot X=\{a\cdot x: a\in A, x\in X\}$ ($X\cdot A=\{x\cdot a: x\in X, a\in A\}$) is dense in $X$. A Banach $A$-module $X$ is called essential, if it is left and right essential.

Let $A$ and $B$ be Banach algebras and $T:B\rightarrow A$ be an algebra homomorphism with $\|T\|\leq 1$. Let $E$ be  a Banach $A$-bimodule. Then $E$ is also  a Banach $B$-bimodule and a Banach $A\times_{T}B$-bimodule with the following actions, respectively
$$b\cdot x=T(b)\cdot x\quad \hbox{and} \quad x\cdot b=x\cdot T(b),$$
where $b\in B, x\in E$ and
$$(a,b)\cdot x=T(b)\cdot x\quad\hbox{and}\quad x\cdot (a,b)=x\cdot T(b),$$
where $(a,b)\in A\times_{T}B, x\in E$.
\begin{lem}
Let $E$ be an essential Banach $A\times_{T}B$-bimodule. Then $E$ is an essential Banach $A$-bimodule and an essential Banach $B$-bimodule.
\end{lem}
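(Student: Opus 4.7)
The plan is to observe that the $A\times_{T}B$-bimodule structure on $E$ factors entirely through the homomorphism $T$, so that $(A\times_{T}B)\cdot E$ coincides as a set with $T(B)\cdot E$ and, symmetrically, $E\cdot(A\times_{T}B)=E\cdot T(B)$. Once this is noticed, both essentiality assertions reduce to one-line inclusions and no nontrivial approximation argument is needed.

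More precisely, essentiality of $E$ as an $A\times_{T}B$-bimodule means that the linear spans of $(A\times_{T}B)\cdot E$ and $E\cdot(A\times_{T}B)$ are each dense in $E$; by the defining formulae $(a,b)\cdot x=T(b)\cdot x$ and $x\cdot(a,b)=x\cdot T(b)$ given just before the lemma, these sets are precisely $T(B)\cdot E$ and $E\cdot T(B)$. For the $B$-bimodule statement I would then invoke the prescribed actions $b\cdot x=T(b)\cdot x$ and $x\cdot b=x\cdot T(b)$, which yield $B\cdot E=T(B)\cdot E=(A\times_{T}B)\cdot E$ and, symmetrically, $E\cdot B=E\cdot(A\times_{T}B)$; density on each side is inherited immediately from the hypothesis.

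For the $A$-bimodule statement the key is the trivial inclusion $T(B)\subseteq A$, from which one obtains $T(B)\cdot E\subseteq A\cdot E$ and $E\cdot T(B)\subseteq E\cdot A$. Consequently the linear span of $A\cdot E$ contains the linear span of $T(B)\cdot E$, which is already dense in $E$ by hypothesis, so $A\cdot E$ has dense linear span as well; the right-hand side is symmetric. I do not anticipate any real obstacle: the proof is a book-keeping consequence of the deliberate choice, in the paragraph preceding the lemma, to define the $A\times_{T}B$- and $B$-module structures so that they factor through $T$. The only point worth flagging is that one should not confuse this factored $A\times_{T}B$-action with the ``diagonal'' action $(a,b)\cdot x=a\cdot x+T(b)\cdot x$ used in other parts of the paper; under that action the lemma would require a genuine density argument, but under the action adopted here it is essentially automatic.
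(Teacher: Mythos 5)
Your proof is correct. Note that the paper states this lemma without any proof at all, so there is no argument to compare against; your observation that, under the actions prescribed in the preceding paragraph, $(A\times_{T}B)\cdot E=T(B)\cdot E=B\cdot E$ and $T(B)\cdot E\subseteq A\cdot E$ (and symmetrically on the right) is exactly the book-keeping the authors evidently considered obvious, and it does establish both essentiality claims. One tiny quibble with your closing remark: under the ``diagonal'' action $(a,b)\cdot x=a\cdot x+T(b)\cdot x$ the lemma would not merely require a genuine density argument --- it would in general be \emph{false} (take $T=0$ and $E=A$ essential over $A$, so that $B\cdot E=\{0\}$), which only reinforces your point that the lemma depends on the specific factored action adopted here.
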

\begin{thm}\label{t3-2}
Let $A$ and $B$ be Banach algebras with bounded approximate identity and let $T:B\rightarrow A$ be an algebra homomorphism with $\|T\|\leq 1$. Let $E$ be an essential $A\times_{T}B$-bimodule. Then $$\mathcal{H}^1(A\times_{T}B, E^*)\simeq \mathcal{H}^1(A,E^*)\times \mathcal{H}^1(B,E^*),$$
where $\simeq$ denotes the vector space isomorphism.\end{thm}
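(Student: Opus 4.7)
My plan is to realize the claimed isomorphism by restricting bounded derivations along the natural inclusions of $A$ and $B$ into $A\times_T B$, and inverting via a direct extension argument.

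The forward map is the most natural one. The inclusions $q_A\colon A\to A\times_T B$, $a\mapsto (a,0)$, and $q_B\colon B\to A\times_T B$, $b\mapsto (0,b)$, are continuous algebra homomorphisms, and they pull the $A\times_T B$-bimodule structure on $E^*$ back to the $A$- and $B$-bimodule structures on $E^*$ described just before the Lemma (the $B$-action factoring through $T$). So for any bounded derivation $D\colon A\times_T B\to E^*$, the compositions $D_A := D\circ q_A$ and $D_B := D\circ q_B$ are bounded derivations. Inner derivations restrict to inner derivations implemented by the same element, so $\Phi([D]) := ([D_A],[D_B])$ gives a well-defined linear map $\Phi\colon \mathcal{H}^1(A\times_T B, E^*)\to\mathcal{H}^1(A, E^*)\times\mathcal{H}^1(B, E^*)$.

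For the candidate inverse, given derivations $D_A\colon A\to E^*$ and $D_B\colon B\to E^*$, the natural guess is $D(a,b) := D_A(a)+D_B(b)$. Expanding both sides of the derivation identity on a product $(a,b)\times_T(c,d) = (ac+T(b)c+aT(d),\,bd)$, using that each of $D_A$ and $D_B$ is a derivation, and using $D_B(bd) = T(b)\cdot D_B(d) + D_B(b)\cdot T(d)$ from the $B$-action via $T$, the failure of $D$ to satisfy the derivation identity reduces after cancellation to
\[
a\cdot\bigl(D_A(T(d))-D_B(d)\bigr) + \bigl(D_A(T(b))-D_B(b)\bigr)\cdot c.
\]
So $D$ is a bounded derivation exactly when the map $F\colon B\to E^*$, $F(b):=D_A(T(b))-D_B(b)$, takes values in the $A$-bimodule annihilator $\{f\in E^*\colon A\cdot f = f\cdot A = 0\}$.

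The main analytical obstacle is to arrange vanishing of this obstruction on cohomology classes. By the preceding Lemma, essentiality of $E$ as an $A\times_T B$-bimodule implies essentiality as both an $A$- and a $B$-bimodule; combined with the BAIs of $A$ and $B$, Cohen's factorization theorem gives $A\cdot E = E = E\cdot A$, so by duality the $A$-bimodule annihilator of $E^*$ is trivial. With this at hand, one argues that $F$ is inner as a $B$-derivation, and modifying $D_B$ by the corresponding inner derivation within its cohomology class we may assume $D_A\circ T = D_B$ on the nose. Then $D(a,b):=D_A(a)+D_B(b)$ is a bona fide bounded derivation and $\Psi([D_A],[D_B]):=[D]$ is a well-defined inverse of $\Phi$. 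The verifications $\Phi\circ\Psi=\mathrm{id}$ and $\Psi\circ\Phi=\mathrm{id}$ are then routine, since by construction the restrictions of $D$ to $A$ and $B$ recover $D_A$ and $D_B$. The bulk of the analytic content is therefore the Cohen-factorization step combined with the verification that the inner adjustment used to align $D_B$ with $D_A\circ T$ does not disturb the chosen cohomology classes.
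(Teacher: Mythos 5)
Your forward map $\Phi$ by restriction along $q_A$ and $q_B$, your computation of the obstruction to $D(a,b)=D_A(a)+D_B(b)$ being a derivation, and your Cohen-factorization argument that essentiality plus the bounded approximate identities force the $A$-annihilator of $E^*$ to vanish are all correct. The gap is the unsupported assertion that the derivation $F=D_A\circ T-D_B\colon B\to E^*$ ``is inner.'' Nothing in the hypotheses makes $B$ weakly amenable, and as $D_A$, $D_B$ range over their cocycle spaces $F$ is an essentially arbitrary element of $\mathcal{Z}^1(B,E^*)$ (take $D_A=0$ and $D_B$ non-inner, which exists whenever $\mathcal{H}^1(B,E^*)\neq 0$). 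When $F$ is not inner, no inner adjustment of $D_B$ achieves $D_B=D_A\circ T$, and your $\Psi$ is simply undefined on that pair. Moreover, the annihilator triviality you proved shows this is not a repairable defect of the extension step but an obstruction to surjectivity of $\Phi$ itself: applying the derivation identity of any $D\in\mathcal{Z}^1(A\times_TB,E^*)$ to $(a,0)\times_T(0,d)=(aT(d),0)$ and to $(0,b)\times_T(c,0)=(T(b)c,0)$ yields $a\cdot\bigl(D_A(T(d))-D_B(d)\bigr)=0$ and $\bigl(D_A(T(b))-D_B(b)\bigr)\cdot c=0$, whence $D\circ q_B=(D\circ q_A)\circ T$ exactly. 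So the image of $\Phi$ is the graph $\{([D_A],[D_A\circ T])\}$ of the map $\mathcal{H}^1(A,E^*)\to\mathcal{H}^1(B,E^*)$ induced by $T$, which is a proper subspace of the product whenever, say, $T$ is an isomorphism and $\mathcal{H}^1(A,E^*)$ is nonzero and finite dimensional. Your own setup, carried through honestly, therefore cannot deliver the claimed isomorphism with both factors acting on all of $E^*$.

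This is also a genuinely different route from the paper's, which makes no direct derivation computation: it passes to multiplier algebras via \cite[Theorem 2.9.53]{Dalse1}, uses the identification $\mathcal{M}(A\times_TB)\cong\mathcal{M}(A)\times\mathcal{M}(B)$ from \cite{Razi}, applies Hochschild's splitting theorem for a direct product of unital algebras, and then descends back to $A$ and $B$ using essentiality. The point your approach exposes is that in Hochschild's theorem the two factors of a unital direct product act on complementary corners $e_1E^*e_1$ and $e_2E^*e_2$ of the coefficient module, not both on all of $E^*$; that decomposition of the coefficients is precisely what is missing from the right-hand side as you (and, arguably, the paper's statement) have written it. If you want to salvage a direct argument, you should first decompose $E^*$ accordingly and prove the isomorphism with the corrected coefficient modules, rather than trying to force $F$ to be inner.
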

\begin{proof} By \cite[Theorem 2.9.53]{Dalse1} we have $\mathcal{H}^1(A\times_{T}B, E^*)\simeq \mathcal{H}^1(M(A\times_{T}B),E^*)$, where $\mathcal{M}(A\times_{T}B)$ denotes the double centralizer algebra of $A\times_{T}B$. But from  \cite[Theorem 4.3]{Razi} we have  $$\mathcal{M}(A\times_{T}B)\cong \mathcal{M}(A)\times\mathcal{M}(B),$$ where $\cong$ denotes the algebra isomorphism.  
 this implies that $$\mathcal{H}^1(M(A\times_{T}B),E^*)\simeq \mathcal{H}^1(\mathcal{M}(A)\times \mathcal{M}(B),E^*),$$ 
where $\mathcal{M}(A)$  and $\mathcal{M}(B)$ denote the double centralizer algebra of $A$ and $B$, respectively.

 Hence we have
$$\mathcal{H}^1(A\times_{T}B,E^*)\simeq \mathcal{H}^1(\mathcal{M}(A)\times \mathcal{M}(B),E^*),$$ 

Using  \cite[Theorem 4]{Hochschild} we obtain  $\mathcal{H}^1(\mathcal{M}(A)\times\mathcal{M}(B),E^*)\simeq \mathcal{H}^1(\mathcal{M}(A),E^*)\times \mathcal{H}^1(\mathcal{M}(B),E^*)$, thus we have
$$\mathcal{H}^1(A\times_{T}B,E^*)\simeq \mathcal{H}^1(\mathcal{M}(A),E^*)\times \mathcal{H}^1(\mathcal{M}(B),E^*).$$
Since $E$ is an essential Banach $A$-bimodule and an essential Banach $B$-bimodule, we have  $$\mathcal{H}^1(\mathcal{M}(A),E^*)\simeq\mathcal{H}^1(A,E^*)$$ and $$\mathcal{H}^1(\mathcal{M}(B),E^*)\simeq \mathcal{H}^1(B,E^*).$$
This completes the proof. 
\end{proof}

We can extend the previous theorem for the $n$-th Hochschild cohomology for the Banach algebra $A\times_TB$.
\begin{cor}
Let $A$ and $B$ be Banach algebras with bounded approximate identity and let $T:B\rightarrow A$ be an algebra homomorphism with $\|T\|\leq 1$. Let $E$ be an essential $A\times_{T}B$-bimodule. Then $$\mathcal{H}^n(A\times_{T}B, E^*)\simeq \mathcal{H}^n(A,E^*)\times \mathcal{H}^n(B,E^*)$$ for every  $n\geq 1$.
\end{cor}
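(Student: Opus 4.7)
The plan is to mimic the argument of Theorem \ref{t3-2} step by step, replacing each invocation of a first-cohomology result by its higher-degree analogue. The chain of isomorphisms I would like to establish reads
\begin{equation*}
\mathcal{H}^n(A\times_{T}B,E^*)\simeq \mathcal{H}^n(\mathcal{M}(A\times_{T}B),E^*)\simeq \mathcal{H}^n(\mathcal{M}(A)\times\mathcal{M}(B),E^*)\simeq \mathcal{H}^n(\mathcal{M}(A),E^*)\times\mathcal{H}^n(\mathcal{M}(B),E^*),
\end{equation*}
and finally to collapse the multiplier algebras back to $A$ and $B$.

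First, since $A$ and $B$ have bounded approximate identities, the Cohen factorization theorem implies $A\times_{T}B$ also has a bounded approximate identity; combined with $E$ being essential, the higher-degree version of \cite[Theorem 2.9.53]{Dalse1} (which applies to $\mathcal{H}^n$ for all $n\geq 1$ whenever the module is a dual of an essential bimodule) gives the first isomorphism. Then the algebra isomorphism $\mathcal{M}(A\times_{T}B)\cong \mathcal{M}(A)\times \mathcal{M}(B)$ from \cite[Theorem 4.3]{Razi}, which is independent of $n$, yields the second.

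Next I would invoke the $n$-dimensional Künneth-type splitting for direct products: for a product algebra $\mathcal{A}_1\times\mathcal{A}_2$ and a Banach bimodule $X$, one has $\mathcal{H}^n(\mathcal{A}_1\times\mathcal{A}_2,X)\simeq \mathcal{H}^n(\mathcal{A}_1,X)\times \mathcal{H}^n(\mathcal{A}_2,X)$ whenever the cross actions vanish appropriately, which is exactly the situation here since $E$ is obtained from the $A\times_{T}B$-action and therefore the $\mathcal{M}(A)$- and $\mathcal{M}(B)$-actions are compatible in the required way. This is the higher-degree form of \cite[Theorem 4]{Hochschild}; if the cited theorem is stated only for $n=1$, the standard dimension-shifting argument via the long exact sequence, or a direct verification on the Hochschild complex by splitting each cochain $T(a_1^{(i_1)},\dots,a_n^{(i_n)})$ according to which factor each argument comes from, provides the needed generalization.

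Finally, essentiality of $E$ as an $A$-bimodule and as a $B$-bimodule (guaranteed by the preceding Lemma) together with the bounded approximate identities on $A$ and $B$ allow the isomorphisms $\mathcal{H}^n(\mathcal{M}(A),E^*)\simeq \mathcal{H}^n(A,E^*)$ and $\mathcal{H}^n(\mathcal{M}(B),E^*)\simeq \mathcal{H}^n(B,E^*)$, again from the $n$-th degree version of \cite[Theorem 2.9.53]{Dalse1}. Concatenating the four steps yields the claim. The main obstacle I anticipate is verifying, or citing correctly, the two passages that were stated in the $n=1$ case of Theorem \ref{t3-2}: the reduction to the multiplier algebra and the cohomological splitting of a direct product. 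Both generalize routinely by dimension shifting, but care is needed to confirm that the essentiality hypothesis survives at every step and that the bimodule structure on $E$ used in each factor is the one induced through $T$.
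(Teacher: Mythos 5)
Your proposal follows essentially the same route as the paper's proof: reduce to the multiplier algebra via the $n$-th degree version of the Dales result (the paper cites \cite[Theorem 2.9.54]{Dalse1} for this, together with \cite[Lemma 3.1]{Razi} for the bounded approximate identity of $A\times_{T}B$), apply $\mathcal{M}(A\times_{T}B)\cong\mathcal{M}(A)\times\mathcal{M}(B)$ from \cite[Theorem 4.3]{Razi}, split via \cite[Theorem 4]{Hochschild} (which is already stated for all $n$, so no dimension shifting is needed), and collapse back to $A$ and $B$ using essentiality. The argument is correct and matches the paper's.
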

\begin{proof}
 By \cite[Lemma 3.1]{Razi} $A$ and $B$ have  bounded approximate identities if and only if $A\times_TB$ has a bounded approximate identity. Using  \cite[Theorem 2.9.54]{Dalse1} one can show that if $A\times_TB$ has a bounded approximate identity, then for every essential $A\times_TB$-bimodule $E$, we have $\mathcal{H}^n(A\times_{T}B, E^*)\simeq \mathcal{H}^n(M(A\times_TB),E^*)$. In \cite[Theorem 4.3]{Razi} the authors showed that $M(A\times_T B)\cong M(A)\times M(B)$ . On the other hand Hochschild  \cite[Theorem 4]{Hochschild} showed that $\mathcal{H}^n(M(A)\times M(B), E^*)\simeq \mathcal{H}^n(M(A),E^*)\times \mathcal{H}^n(M(B),E^*) $. Hence we have $\mathcal{H}^n(A\times_{T}B, E^*)\simeq \mathcal{H}^n(A,E^*)\times \mathcal{H}^n(B,E^*)$ where $n\geq 1$. 
\end{proof}
Note that Bhatt and Dabshi in \cite{Bhatt} showed that  $A\times_{T}B$ is amenable if and only if $A$ and $B$ are amenable. In the essential case this  is an immediate corollary of Theorem \ref{t3-2}.


\begin{thebibliography}{99} 
\bibitem{Bhatt}
S. J. Bhatt and  P. A. Dabshi, Arens regularity and amenability of Lau product of Banach algebras defined by a Banach
algebra morphism, {\it Bull. Aust. Math. Soc.} {\bf 87} (2013), 195-206.
\bibitem{Dalse1}
H. G. Dales, Banach algebras and automatic continuity, London Mathematical Society Monographs 24, Clarendon Press, Oxford, 2000.
\bibitem{Dalse}
H. G. Dales, M. E. Polyakov, Homological properties of modules over group algebras, {\it Proc. London Math. Soc.} (3) {\bf 89} (2004), 390-426.
\bibitem{Helemskii certain}
A. Ya. Helemskii, A certain class of flat Banach modules and its applications, {\it Vestnik. Moskov. Univ. Ser. Mat. Mekh.} {\bf 27} (1972), 29-36
\bibitem{Helemskii homology}
A. Ya. Helemskii, The homology of Banach and topological algebras, {\it Kluwer Academic Publishers Group}, Dordrecht, (1989).
\bibitem{Hochschild}
G. Hochschild, On the cohomology theory for associative algebras, {\it Ann. of Math.} (2) {\bf 47}
(1946), 568-579.
\bibitem{Johnson}
B. E. Johnson, The Wedderburn decomposition of Banach algebras with finite
dimensional radical, {\it Amer. J. Math.} {\bf90} (1968), 866-876. 
\bibitem{Johnson1972} 
B. E. Johnson, Cohomology in Banach algebras, {\it Mem. Amer. Math. Soc.} {\bf 127} (1972). 
\bibitem{Kamowitz}
H. Kamowitz, Cohomology groups of commutative Banach algebras, {\it Trans. Amer.Math. Soc.} {\bf 102} (1962), 352-372.
 \bibitem{Lau}
A. T. Lau, Analysis on a class of Banach algebras with applications to harmonic analysis on locally compact
groups and semigroups, {\it Fund. Math.} {\bf 118} (3) (1983), 161-175.
\bibitem{Ramsden}
P. Ramsden, Homological properties of modules over semigroup algebras, {\it J. Funct. Anal.} {\bf 258} (2010) 3988-4009.
\bibitem{Razi}
N. Razi, A. Pourabbas, Some homological properties of $T$-Lau product algebras, {\it Period. Math. Hungar.} to appear.
\bibitem{Monfared}
M. Sangani Monfared, On certain products of Banach algebras with applications to harmonic analysis, {\it  Studia Math.}
{\bf  178} (3) (2007), 277-294.
\bibitem{White}
M. C. White, Injective modules for uniform algebras, {\it Proc. London Math. Soc.} (3) {\bf 73} (1996), 155-184
\end{thebibliography}
\end{document}